\documentclass{amsart}
\usepackage{amsfonts}
\usepackage{amsmath,amssymb}
\usepackage{amsthm}
\usepackage{amscd}
\usepackage{graphics}
\usepackage{graphicx}

\theoremstyle{remark}{
\newtheorem{Def}{{\rm Definition}}
\newtheorem{Ex}{{\rm Example}}
\newtheorem{Rem}{{\rm Remark}}

}
\theoremstyle{plain}
{
\newtheorem{Cor}{Corollary}

\newtheorem{Thm}{Theorem}

}

\begin{document}
\title[Smooth maps onto regions respecting height functions]{Smooth maps onto regions in Euclidean spaces respecting height functions}
\author{Naoki kitazawa}
\keywords{Regions in Euclidean spaces. Smooth, real analytic, or real algebraic (real polynomial) functions and maps. Height functions. Implicit function theorem. (Image simple) fold map and round fold maps. \\
\indent {\it \textup{2020} Mathematics Subject Classification}: Primary~57R45, 58C05, 58C25. Secondary~ 57R19.}

\address{Osaka Central Advanced Mathematical Institute (OCAMI) \\
3-3-138 Sugimoto, Sumiyoshi-ku Osaka 558-8585
TEL: +81-6-6605-3103
}
\email{naokikitazawa.formath@gmail.com}
\urladdr{https://naokikitazawa.github.io/NaokiKitazawa.html}
\maketitle
\begin{abstract}
One of interest of the author lies in the canonical projections of the unit spheres and generalizations. He is interested in the canonical projections of embedded manifolds in the higher dimensional Euclidean spaces, including height functions, especially, their construction and representations by explicit equations. This is a kind of fundamental studies on construction or global theory on singularity theory of differentiable maps, closely related to differential geometry, differential topology, and algebraic geometry. Rather than knowing existence, construction has been difficult in certain senses.

We construct smooth maps onto regions in Euclidean spaces locally modeled by height functions and show new examples. This is related to a pioneering study of the author, certain generalizations of the canonical projections of the unit spheres (, so-called {\it special generic} maps), and construction of them as projections of connected components of the zero sets of some smooth maps to some Euclidean spaces.


\end{abstract}
\section{Introduction.}
\label{sec:1}

The natural heights of the unit spheres and the canonical projections of the unit spheres are of fundamental smooth and real algebraic maps.

They are generalized to {\it height functions}, and the canonical projections of manifolds embedded into higher dimensional Euclidean spaces as smooth submanifolds (real algebraic manifolds). This is a kind of natural generalizations, respecting differential geometry and real algebraic geometry. As another kind of generalizations, {\it special generic} maps are of such ones, where singularities are of simplest ones for local extrema. Related studies have started in the story of higher dimensional versions of Morse function theory and related singularity theory of differentiable maps. For related classical and pioneering singularity theory, see \cite{thom, whitney}. For special generic maps, see \cite{burletderham, furuyaporto} and see \cite{saeki2} for related differential topological theory, with a related pioneering study \cite{saeki1}.

In global geometric theory of differentiable maps, existence of smooth maps of certain nice classes such as {\it fold} maps, which are canonically defined higher dimensional versions of fold maps, and special generic maps, are important. For fold maps, it is the existence theory via technique on differential equations (\cite{eliashberg1, eliashberg2} are pioneering studies). Explicit construction of fold maps global shapes and equations of which can be explicitly understood is another difficult problem. Special generic maps form a very specific subclass of the class of fold maps and characterizations of manifolds admitting such maps are nice and important problems in algebraic topology and differential topology. Construction of such maps is also not difficult: we can reconstruct naturally them from given smoothly immersed compact manifolds of codimension $0$ in the $n$-dimensional Euclidean space ${\mathbb{R}}^n$ ($\mathbb{R}:={\mathbb{R}}^1$), where we may reconstruct ones of more than one type.  

We use ${\pi}_{k,k_1}:{\mathbb{R}}^k \rightarrow {\mathbb{R}}^{k_1}$ for the projection to the first $k_1$ components ($1 \leq k_1<k$). Special generic maps of a certain simplest class have been reconstructed by the author again in the preprint \cite{kitazawa9}, respecting the canonical projection ${\pi}_{m+1,n} {\mid}_{S^m}$ of the $m$-dimensional unit sphere $S^m:=\{x=(x_1,\ldots x_{m+1}) \in {\mathbb{R}}^{m+1} \mid {\Sigma}_{j=1}^{m+1} {x_j}^2=1\}$, whose image is the disk $D^n:=\{x=(x_1,\ldots x_{n}) \in {\mathbb{R}}^{n} \mid {\Sigma}_{j=1}^{n} {x_j}^2 \leq 1\}$ and the natural projection of a naturally embedded manifold represented as the form ${\sharp}_{j=1}^l (S^{k_j} \times S^{m-k_j})$ ($1 \leq k_j \leq m-1$) of the connected sum defined in the smooth category, and their topological properties such as the cohomology rings of the manifolds of their domains are investigated. Motivated by the viewpoint of construction on explicit real algebraic sets and submanifolds represented as the zero sets of smooth maps into some Euclidean spaces, the author presented a pioneering study \cite{kitazawa5}. The preprint \cite{kitazawa6}, which is a kind of remark on \cite{kitazawa5}.

In the present paper, we construct smooth maps which may not be special generic motivated by the importance of construction. \cite{kitazawa8} is also a recent study on reconstructing special generic maps onto given regions, foliated by hypersurfaces and understood to be a further study to \cite{kitazawa5}.

In the next section, we present additional preliminaries and our main result, Theorems \ref{thm:1} and \ref{thm:2}, and related corollaries and examples.

\section{Our main result.}
\subsection{Preliminaries.}
We use $\mathbb{Z}$ for the set of all integers.

For a subspace $Y$ of a topological space $X$, the closure of $Y$ in $X$ is denoted by ${\overline{Y}}^X$.

A {\it proper} map $c:X \rightarrow Y$ means a map between topological spaces such that the preimage $c^{-1}(K)$ is compact for any compact subset $K \subset Y$.

For a tutorial of singularity theory of differentiable maps, see the textbook \cite{golubitskyguillemin}. A {\it singular} point of a smooth map $c:X \rightarrow Y$ and its {\it singular set} $S(c):=\{x \in X \mid \text{$x$ is a singular point of $c$.}\}$ are of fundamental and important notions. 
$c(S(c))$ is its {\it singular value set}. A {\it $C^r$-diffeomorphism} is a homeomorphism n the case $r=0$ and a homeomorphism with no singular point in the case of $r \geq 1$ being an positive integer or $r=\infty$: in the case $r=\infty$ it is a {\it diffeomorphism}.
We use "{\it critical}" instead of "singular" in the case the space of the target is a $1$-dimensional manifold. Morse functions appear as important explicit functions. See \cite{milnor1, milnor2}. {\it Fold} maps are defined to be smooth maps between manifolds with no boundary which are around singular points represented as the products of Morse functions and the identity map some open set in a Euclidean space. For this, see \cite{golubitskyguillemin} again and presented papers on fold maps.

We define important notions on equivalence of smooth maps.
\begin{Def}
For smooth maps $c_1:X_1 \rightarrow Y_1$ and $c_2:X_2 \rightarrow Y_2$, we have the following equivalence relation.
\begin{enumerate}
\item $c_1$ and $c_2$ are {\it $(C^r,\mathcal{A})$-equivalent} or equivalently,  $c_1$ {\rm (}$c_2${\rm )} is {\it $(C-r,\mathcal{A})$-equivalent} to $c_2$ {\rm (}resp. $c_1${\rm )}, if there exists a pair $({\phi}_X:X_1 \rightarrow X_2,{\phi}_Y:Y_1 \rightarrow Y_2)$ of $C^r$-diffeomorphisms satisfying ${\phi}_Y \circ c_1=c_2 \circ {\phi}_X$. In the case $r=\infty$, we replace $(C^r,\mathcal{A})$ by $\mathcal{A}$.
\item If in the definition above, $Y_1=Y_2$ and ${\phi}_Y$ can be chosen to be the identity map, then $\mathcal{A}$ is replaced by {\it $\mathcal{R}$}.
\end{enumerate}
\end{Def}

A {\it height function} of an $m$-dimensional submanifold $X$ smoothly embedded in ${\mathbb{R}}^k$ ($m \leq k$) is ${\pi}_{k,1} {\mid}_{X}$. 

A {\it special generic} map is a fold map from an $m$-dimensional manifold to $n$-dimensional manifold ($m \geq n$) which are around each singular point represented as the product of the height function $h_{D^{m-n+1}}:D^{m-n+1} \rightarrow \mathbb{R}$ ($h_{D^{m-n+1}}(x):=h_{D^{m-n+1}}((x_1,\ldots x_{m-n+1}))={\Sigma}_{j=1}^{m-n+1} {x_j}^2$) and the identity map on some open set in the Euclidean space ${\mathbb{R}}^{n-1}$ for suitable local coordinates.

A {\it real algebraic} manifold $X$ means a submanifold with no boundary embedded in ${\mathbb{R}}^{m+k}$ ($k>0$) is a union of connected components of the zero set of a real polynomial map $F_X$ from ${\mathbb{R}}^{m+k}$ to ${\mathbb{R}}^{k}$ whose restriction $F_X {\mid_X}$ has no singular point. A {\it real algebraic} map is such a map ${\pi}_{m+k,n} {\mid}_X$.

For related real algebraic geometry, refer to \cite{kollar, lellis} and see also the textbook \cite{bochnakcosteroy}.

An {\it affine transformation} of ${\mathbb{R}}^k$ means a homeomorphism (diffeomorphism) of ${\mathbb{R}}^k$ represented as the composition of finitely many linear transformations and parallel transformations on ${\mathbb{R}}^k$.


Hereafter, $\{S_{\lambda}\}_{\lambda \in \Lambda}$ denotes a family of mutually disjoint smooth ($n-1$)-dimensional manifolds (hypersurfaces) with no boundary of ${\mathbb{R}}^n$.

\subsection{One of our main result and related examples.}
This is a generalization of \cite{kitazawa5} and a preprint \cite{kitazawa6} and related to a recent study \cite{kitazawa7}, where we do not need to understand these non-trivial mathematical arguments with facts.

\begin{Thm}
\label{thm:1}
Let $h_X:X \rightarrow \mathbb{R}$ denote a proper height function ${\pi}_{m_2,1} {\mid}_X$ on an $m_1$-dimensional smooth connected compact submanifold of ${\mathbb{R}}^{m_2}$ {\rm (}$1 \leq m_1<m_2${\rm )} with no boundary whose critical value set $h_X(S(h_X))$ is closed and discrete in $\mathbb{R}$ and represented as $\{s_{{\lambda}_{j}} \in \mathbb{R} \mid {\lambda}_{j} \in {\Lambda}_{h_X} \}$ for some subset of integers ${\Lambda}_{h_x}$ with the following.
\begin{itemize}
\item ${\Lambda}_{h_x}$ has the minimum integer $m_{{\Lambda}_{h_x}}$.
\item For any distinct two elements ${\lambda}_{j_1} \in {\Lambda}_{h_x}$ and ${\lambda}_{j_2} \in {\Lambda}_{h_x}$ satisfying ${\lambda}_{j_1}<{\lambda}_{j_2}$, every integer ${\lambda}_{j_1}<{\lambda}_{j_1,j_2}<{\lambda}_{j_2}$ is an element of ${\Lambda}_{h_X}$.
\item $s_{{\lambda}_{j_1}}<s_{{\lambda}_{j_2}}$ for any pair ${\lambda}_{j_1}<{\lambda}_{j_2}$.
\end{itemize}
Suppose also that a family $\{S_{\lambda}\}_{\lambda \in {\Lambda}_S}$ of mutually disjoint smooth {\rm (}$n-1${\rm )}-dimensional manifolds {\rm (}hypersurfaces{\rm )} with no boundary of ${\mathbb{R}}^n$ located in some bounded set of ${\mathbb{R}}^n$, an integer-valued function $m_{S}:{\Lambda}_S \rightarrow {\Lambda}_{h_x}$ on ${\Lambda}_S$, and a map $m_{S,0}$, are given, as follows.
\begin{enumerate}
\item \label{thm:1.1} There exists a unique unbounded connected region $R_{{\lambda}_{\rm u}}$ of ${\mathbb{R}}^n-{\sqcup}_{\lambda \in \Lambda} S_{\lambda}$. Furthermore, the boundary of the closure ${\overline{R_{{\lambda}_{\rm u}}}}^{{\mathbb{R}}^n}$ is represented by one hypersurface $S_{{\lambda}_{o}}$ {\rm (}${\lambda}_{o} \in {\Lambda}_S${\rm )} which is also connected and satisfies $m_{S}({\lambda}_{o})=m_{{\Lambda}_{h_x}}$.
\item  \label{thm:1.2}
The boundary of the closure ${\overline{R_{{\Lambda}_i}}}^{{\mathbb{R}}^n}$ of each open and bounded connected region $R_{{\Lambda}_i}$ of ${\mathbb{R}}^n-{\sqcup}_{\lambda \in {\Lambda}_S} S_{\lambda}$ is represented by the form ${\sqcup}_{{\lambda}_{i} \in {\Lambda}_i} S_{{\lambda}_i}$ and for this, the image $m_{S}({\Lambda}_i)$ consists of at most $2$ numbers and in the case of a two-element set, the difference of the two numbers is $\pm 1$. We can find a one-to one correspondence between each open and bounded connected region $R_{{\Lambda}_i}$ of ${\mathbb{R}}^n-{\sqcup}_{\lambda \in {\Lambda}_S} S_{\lambda}$ and such a set ${\Lambda}_i \subset \Lambda$.
\item  \label{thm:1.3}
 If the boundaries of the closures ${\overline{R_{{\Lambda}_{i_1}}}}^{{\mathbb{R}}^n}$ and ${\overline{R_{{\Lambda}_{i_2}}}}^{{\mathbb{R}}^n}$ of distinct open and bounded connected regions $R_{{\lambda}_{i_1}}$ and $R_{{\lambda}_{i_2}}$ of ${\mathbb{R}}^n-{\sqcup}_{\lambda \in \Lambda} S_{\lambda}$ with $m_{S}({\Lambda}_{i_1})$ and $m_{S}({\Lambda}_{i_2})$ being two-point sets intersect, then the images $m_{S}({\Lambda}_{i_1})$ and $m_{S}({\Lambda}_{i_2})$ are distinct.
\item \label{thm:1.4}
 There exists a map $m_{S,0}$ from the set of all open and bounded connected regions $R_{{\Lambda}_i}$ of ${\mathbb{R}}^n-{\sqcup}_{\lambda \in {\Lambda}_S} S_{\lambda}$ with $m_{S}({\Lambda}_{i})$ being one-point sets to $\{1,2\} \subset \mathbb{Z}$. Note that such distinct two open and bounded connected regions $R_{{\Lambda}_{i_1}}$ and $R_{{\Lambda}_{i_2}}$ of ${\mathbb{R}}^n-{\sqcup}_{\lambda \in {\Lambda}_S} S_{\lambda}$ whose closures ${\overline{R_{{\Lambda}_{i_1}}}}^{{\mathbb{R}}^n}$ and ${\overline{R_{{\Lambda}_{i_2}}}}^{{\mathbb{R}}^n}$ cannot intersect.
\item \label{thm:1.5} If the boundaries of the closures ${\overline{R_{{\Lambda}_{i_1}}}}^{{\mathbb{R}}^n}$ and ${\overline{R_{{\Lambda}_{i_2}}}}^{{\mathbb{R}}^n}$ of distinct two open and bounded connected regions $R_{{\Lambda}_{i_1}}$ and $R_{{\Lambda}_{i_2}}$ of ${\mathbb{R}}^n-{\sqcup}_{\lambda \in \Lambda} S_{\lambda}$ with $m_{S}({\Lambda}_{i_1})$ being a two-point set and $m_{S}({\Lambda}_{i_2})$ being a one-point set intersect and the unique element of $m_{S}({\Lambda}_{i_1})-m_{S}({\Lambda}_{i_2})$ is greater {\rm (}smaller{\rm )} than the unique element of $m_{S}({\Lambda}_{i_2})$, then $m_{S,0}(R_{{\Lambda}_{i_2}})$ is $1$ {\rm (}resp. $2${\rm )}.
\item  \label{thm:1.6}

The union of the closures ${\overline{R_{{\Lambda}_i}}}^{{\mathbb{R}}^n}$ of all open and bounded connected regions $R_{{\Lambda}_i}$ of ${\mathbb{R}}^n-{\sqcup}_{\lambda \in {\Lambda}_S} S_{\lambda}$ and the function $h_X$ satisfy at least one of the following. Hereafter, if $h_X$ has the maximum, then ${\Lambda}_{h_x}$ has the maximum integer $m_{{\Lambda}_{h_x},{\rm M}}$ of course.
\begin{enumerate}
\item \label{thm:1.6.1}
$h_X$ has the minimum and the maximum. The union of the closures ${\overline{R_{{\Lambda}_i}}}^{{\mathbb{R}}^n}$ of all open and bounded connected regions $R_{{\Lambda}_i}$ of ${\mathbb{R}}^n-{\sqcup}_{\lambda \in {\Lambda}_S} S_{\lambda}$ satisfying at least one of the following is connected.
\begin{enumerate}
\item $m_{S}({\Lambda}_i) \neq \{m_{{\Lambda}_{h_x}}\}, \{m_{{\Lambda}_{h_x},{\rm M}}\}$.
\item $m_{S}({\Lambda}_i)=\{m_{{\Lambda}_{h_x}}\}$ and $m_{S,0}(\overline{R_{{\Lambda}_i}})=2$.
\item $m_{S}({\Lambda}_i)=\{m_{{\Lambda}_{h_x},{\rm M}}\}$ and $m_{S,0}(\overline{R_{{\Lambda}_i}})=1$.
\end{enumerate}
\item \label{thm:1.6.2}
$h_X$ has the maximum and does not have the minimum. The union of the closures ${\overline{R_{{\Lambda}_i}}}^{{\mathbb{R}}^n}$ of all open and bounded connected regions $R_{{\Lambda}_i}$ of ${\mathbb{R}}^n-{\sqcup}_{\lambda \in {\Lambda}_S} S_{\lambda}$ satisfying at least one of the following is connected.
\begin{enumerate}
\item $m_{S}({\Lambda}_i) \neq \{m_{{\Lambda}_{h_x}}\}$.
\item $m_{S}({\Lambda}_i)=\{m_{{\Lambda}_{h_x}}\}$ and $m_{S,0}(\overline{R_{{\Lambda}_i}})=2$.
\end{enumerate}
\item \label{thm:1.6.3}
$h_X$ has the minimum and does not have the maximum. The union of the closures ${\overline{R_{{\Lambda}_i}}}^{{\mathbb{R}}^n}$ of all open and bounded connected regions $R_{{\Lambda}_i}$ of ${\mathbb{R}}^n-{\sqcup}_{\lambda \in {\Lambda}_S} S_{\lambda}$ satisfying at least one of the following is connected.
\begin{enumerate}
\item $m_{S}({\Lambda}_i) \neq \{m_{{\Lambda}_{h_x}}\}$.
\item $m_{S}({\Lambda}_i)=\{m_{{\Lambda}_{h_x},{\rm M}}\}$ and $m_{S,0}(\overline{R_{{\Lambda}_i}})=1$.
\end{enumerate}
\item \label{thm:1.6.4}
 $h_X$ does not have the minimum or the maximum.
\end{enumerate}
\end{enumerate}
In this setting, we have the following objects. 

\begin{enumerate}
\setcounter{enumi}{6}
\item \label{thm:1.7} A smooth function $c_{\{S_{\lambda}\}_{\lambda \in {\Lambda}_S}}$ on ${\mathbb{R}}^n$ such that ${c_{\{S_{\lambda}\}_{\lambda \in {\Lambda}_S}}}^{-1}(s_{{\lambda}_{j}})={\sqcup}_{{\lambda}_ {j^{\prime},j} \in \{{{\lambda}^{\prime}}_{j^{\prime}} \mid m_{S}({{\lambda}^{\prime}}_{j^{\prime}})={\lambda}_j\}} S_{{\lambda}_ {j^{\prime},j}}$ and that $c_{\{S_{\lambda}\}_{\lambda \in {\Lambda}_S}} {\mid}_{S_{\lambda}}$ has no critical point for any $\lambda \in {\Lambda}_S$ .
\item \label{thm:1.8} An $m$-dimensional smooth connected submanifold $X_{{\Lambda}_{h_x},\{S_{\lambda}\}_{\lambda \in {\Lambda}_S},m_{S}, c_{\{S_{\lambda}\}_{\lambda \in {\Lambda}_S}}}$ of ${\mathbb{R}}^{m_2+n-1}$  with no boundary, where $m:=m_1+n-1$. More precisely, $X_{{\Lambda}_{h_x},\{S_{\lambda}\}_{\lambda \in {\Lambda}_S},m_{S}, c_{\{S_{\lambda}\}_{\lambda \in {\Lambda}_S}}}$ can be defined as the set canonically by the relation $X_{{\Lambda}_{h_x},\{S_{\lambda}\}_{\lambda \in {\Lambda}_S},m_{S}, c_{\{S_{\lambda}\}_{\lambda \in {\Lambda}_S}}} \bigcap {{\pi}_{m_2+n-1,n}}^{-1}(p)=\{p\} \times {h_X}^{-1}(c_{\{S_{\lambda}\}_{\lambda \in {\Lambda}_S}}(p))$ {\rm (}$p \in {\mathbb{R}}^n${\rm )}.
\end{enumerate}
\end{Thm}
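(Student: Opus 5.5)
The plan has two steps: construct the function $c:=c_{\{S_{\lambda}\}_{\lambda \in {\Lambda}_S}}$ of (\ref{thm:1.7}) region by region, then obtain (\ref{thm:1.8}) from the implicit function theorem. For (\ref{thm:1.8}) we set
\[
X_{{\Lambda}_{h_x},\{S_{\lambda}\}_{\lambda \in {\Lambda}_S},m_{S},c}:=\{(p,x) \in {\mathbb{R}}^n \times {\mathbb{R}}^{m_2-1} \mid (c(p),x) \in X\}.
\]
With this definition, its intersection with ${{\pi}_{m_2+n-1,n}}^{-1}(p)$ is exactly $\{p\} \times {h_X}^{-1}(c(p))$. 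This set is the preimage of $X$ under the smooth map $\Phi(p,x):=(c(p),x)$.

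\emph{Construction of $c$.} First, $c$ is defined near each hypersurface. Since every $S_{\lambda}$ is a closed hypersurface in a bounded set, it is compact and separates ${\mathbb{R}}^n$, so it has a collar $S_{\lambda}\times(-\epsilon,\epsilon)$. On this collar set $c(q,t):=s_{m_S(\lambda)}+\sigma_\lambda t$ with $\sigma_\lambda=\pm 1$. The sign is chosen so that $c$ increases toward the adjacent region on whose boundary the other value of $m_S$ (the larger or the smaller one) appears. This gives $dc\neq 0$ near $S_\lambda$, which is the regularity condition intended in (\ref{thm:1.7}).

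Next, $c$ is extended into the regions.
\begin{itemize}
\item On a bounded region whose $m_S({\Lambda}_i)$ is a two-point set $\{k,k+1\}$, extend $c$ to a smooth function with values strictly inside $(s_k,s_{k+1})$ away from the boundary, for instance by interpolating with a partition of unity.
\item On a region with one-point image $\{k\}$, extend $c$ with values in $(s_{k-1},s_k)$ if $m_{S,0}=1$ and in $(s_k,s_{k+1})$ if $m_{S,0}=2$. Conditions (\ref{thm:1.3}) and (\ref{thm:1.5}) ensure that these sign choices agree with the collar orientations on every boundary component.
\item On the unbounded region $R_{{\lambda}_{\rm u}}$, extend $c$ monotonically away from the level $s_{m_{{\Lambda}_{h_x}}}$, moving into $h_X(X)$. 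If $h_X$ has a minimum, then $s_{m_{{\Lambda}_{h_x}}}$ is that minimum, and $c$ goes below it outside.
\end{itemize}
No region takes the value of any $s_{\lambda_j}$ in its interior, so $c^{-1}(s_{\lambda_j})$ is exactly the prescribed union of hypersurfaces.

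\emph{Smoothness of $X_{\ldots}$.} At a point $(p,x)$ with $(c(p),x)\in X$ there are two cases.
\begin{itemize}
\item If $(c(p),x)$ is not critical for $h_X$, then $T X$ projects onto the first coordinate. Hence $\Phi$ is transverse to $X$ there, and the preimage is a submanifold of codimension $m_2-m_1$, that is, of dimension $m_1+n-1$.
\item If $(c(p),x)$ is a critical point, then $c(p)=s_{\lambda_j}$, so $p\in S_\lambda$ and $dc_p\neq 0$. Then $\Phi$ is a submersion near $p$, and transversality holds automatically.
\end{itemize}
Thus $X_{\ldots}$ is a smooth submanifold without boundary. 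It is closed, since $X$ is closed and $\Phi$ is continuous.

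\emph{Main obstacle: connectedness.} Over an open set where $c$ avoids critical values, $X_{\ldots}$ is locally a product with the fibre ${h_X}^{-1}(c(p))$. Points where $c(p)$ lies outside $h_X(X)$ contribute nothing, since the fibre is empty there. The key point is that condition (\ref{thm:1.6}) says exactly that the set of $p$ over which the fibre is nonempty is connected. Indeed, the regions excluded in (\ref{thm:1.6.1})--(\ref{thm:1.6.3}) are precisely those mapped below the minimum or above the maximum. Fibres jump across a critical level, but $X$ itself is connected, and the regions on either side of each $S_\lambda$ correspond to the two sides of a critical level of $h_X$, so $X_{\ldots}$ realizes a cobordism of level sets. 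I would therefore argue by path-lifting: a path in the connected base lifts, and within each fibre, components are joined through nearby critical levels in the same way as in $X$. The step I expect to be hardest is making this gluing argument rigorous when fibres over a two-point region are disconnected.
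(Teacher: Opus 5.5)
The genuine gap is connectedness, and it cannot be closed under the stated hypotheses. Your transversality argument for $\Phi(p,x)=(c(p),x)$ is correct, and it reaches the $m$-dimensional submanifold more cleanly than the paper's Ehresmann/$\mathcal{A}$-equivalence sketch. Your construction of $c$ follows the paper's Step 1.

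The mechanism you propose is that fibre components are joined ``through nearby critical levels in the same way as in $X$''. This fails, because the critical level at which two components of $h_X^{-1}(t)$ merge in $X$ need not be attained by $c$ anywhere. Here is a counterexample satisfying every hypothesis.
Take $n=2$ and let $X\subset\mathbb{R}^3$ be an embedded $2$-sphere shaped like a tooth with two roots of different lengths, with the first coordinate pointing up. Its height function has exactly four critical points: minima at $s_1<s_2$ (one in each root), a saddle at $s_3$ where the roots merge, and the maximum at $s_4$. Thus $X\cap\{x_1<s_3\}=D_1\sqcup D_2$ is two open discs.
Let $S_{\lambda_o}=\{|p|=3\}$ with $m_S=1$ and $S_a=\{|p|=2\}$ with $m_S=2$. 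The annulus between them has $m_S$-image $\{1,2\}$. The inner disc has image $\{2\}$, and (\ref{thm:1.5}) forces $m_{S,0}=2$ on it. Conditions (\ref{thm:1.1})--(\ref{thm:1.5}) and (\ref{thm:1.6.1}) all hold; the union in (\ref{thm:1.6.1}) is the closed disc of radius $3$.
But (\ref{thm:1.7}) gives $c^{-1}(s_3)=\emptyset$, so $c<s_3$ on all of $\mathbb{R}^2$ for \emph{every} admissible $c$. Hence $X_{\ldots}=\Phi^{-1}(D_1)\sqcup\Phi^{-1}(D_2)$, with both pieces open in $X_{\ldots}$. Both are nonempty: the first contains the points over $S_{\lambda_o}$ lying over the lower minimum, and the second contains the points over $S_a$ lying over the upper minimum. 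For the $c$ of the proof, $X_{\ldots}$ is $S^3\sqcup S^3$.

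So connectedness needs an extra hypothesis. One option is that the level sets of $h_X$ be connected. Then $\pi_{m_2+n-1,n}|_{X_{\ldots}}$ is a proper surjection with connected fibres onto the connected set of (\ref{thm:1.6}), which forces connectedness. Another option is a condition tying the levels reached by $c$ on that set to the levels at which components of $h_X^{-1}(t)$ merge.

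The paper's proof does not supply the missing step either. It only asserts that connectedness follows from the connectedness of $X$ and condition (\ref{thm:1.6}), and the example shows this is not enough. You were right to single out this step, but no gluing argument can finish it.

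A second, smaller point concerns your claim that (\ref{thm:1.3}) and (\ref{thm:1.5}) make all collar orientations consistent: it misses the interface at $S_{\lambda_o}$. Suppose the bounded region bordering $S_{\lambda_o}$ is one-point and has another boundary component adjacent to a region with image $\{m_{\Lambda_{h_X}},m_{\Lambda_{h_X}}+1\}$. Then (\ref{thm:1.5}) forces $m_{S,0}=1$ on it. Your prescriptions then put $c$ below $s_{m_{\Lambda_{h_X}}}$ on both sides of $S_{\lambda_o}$, so $dc=0$ along $S_{\lambda_o}$. The paper's Step 1 has the same conflict, and an extra hypothesis is needed here as well.
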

\begin{proof}
STEP 1 On $c_{\{S_{\lambda}\}_{\lambda \in {\Lambda}_S}}:{\mathbb{R}}^n \rightarrow \mathbb{R}$ of (\ref{thm:1.7}). \\

We can construct a smooth $c_{\{S_{\lambda}\}_{\lambda \in {\Lambda}_S}}:{\mathbb{R}}^n \rightarrow \mathbb{R}$ of (\ref{thm:1.1}) with the following, by existence theory of smooth functions based on so-called partitions of the unity. The conditions (\ref{thm:1.1}, \ref{thm:1.2}, \ref{thm:1.3}, \ref{thm:1.4}, \ref{thm:1.5}) are also respected. 
\begin{itemize}
\item ${c_{\{S_{\lambda}\}_{\lambda \in {\Lambda}_S}}}^{-1}(s_{{\lambda}_{j}})={\sqcup}_{{\lambda}_ {j^{\prime},j} \in \{{{\lambda}^{\prime}}_{j^{\prime}} \mid m_{S}({{\lambda}^{\prime}}_{j^{\prime}})={\lambda}_j\}} S_{{\lambda}_ {j^{\prime},j}}$.
\item If for an open and bounded connected region $R_{{\Lambda}_i}$ of ${\mathbb{R}}^n-{\sqcup}_{\lambda \in {\Lambda}_S} S_{\lambda}$, the boundary of its closure ${\overline{R_{{\Lambda}_i}}}^{{\mathbb{R}}^n}$ is represented by the form ${\sqcup}_{{\lambda}_{i} \in {\Lambda}_i} S_{{\lambda}_i}$ and  the image $m_{S}({\Lambda}_i)$ consists of exactly $2$ numbers ${\lambda}_{i,1}<{\lambda}_{i,2}$, then the image $c_{\{S_{\lambda}\}_{\lambda \in {\Lambda}_S}}(R_{{\Lambda}_i})$ is $\{s \mid s_{{\lambda}_{i,1}}<s<s_{{\lambda}_{i,2}}\}$.
\item If for an open and bounded connected region $R_{{\Lambda}_i}$ of ${\mathbb{R}}^n-{\sqcup}_{\lambda \in {\Lambda}_S} S_{\lambda}$, the boundary of its closure ${\overline{R_{{\Lambda}_i}}}^{{\mathbb{R}}^n}$ is represented by the form ${\sqcup}_{{\lambda}_{i} \in {\Lambda}_i} S_{{\lambda}_i}$ and  the image $m_{S}({\Lambda}_i)$ consists of exactly $1$ number ${\lambda}_{i}$ with $m_{S,0}(R_{{\Lambda}_i})=1$, then the image $c_{\{S_{\lambda}\}_{\lambda \in {\Lambda}_S}}(R_{{\Lambda}_i})$ is
a connected subset of $\{s \mid s<s_{{\lambda}_{i}}\}$ and  if $s_{{\lambda}_{i}-1}$ exists in addition, then the minimum of $c_{\{S_{\lambda}\}_{\lambda \in {\Lambda}_S}}(R_{{\Lambda}_i})$ is strictly greater than $s_{{\lambda}_{i}-1}$.
\item If for an open and bounded connected region $R_{{\Lambda}_i}$ of ${\mathbb{R}}^n-{\sqcup}_{\lambda \in {\Lambda}_S} S_{\lambda}$, the boundary of its closure ${\overline{R_{{\Lambda}_i}}}^{{\mathbb{R}}^n}$ is represented by the form ${\sqcup}_{{\lambda}_{i} \in {\Lambda}_i} S_{{\lambda}_i}$ and the image $m_{S}({\Lambda}_i)$ consists of exactly $1$ number ${\lambda}_{i}$ with $m_{S,0}(R_{{\Lambda}_i})=2$, then the image $c_{\{S_{\lambda}\}_{\lambda \in {\Lambda}_S}}(R_{{\Lambda}_i})$ is
a connected subset of $\{s \mid s>s_{{\lambda}_{i}}\}$ and  if $s_{{\lambda}_{i}+1}$ exists in addition, the maximum of $c_{\{S_{\lambda}\}_{\lambda \in {\Lambda}_S}}(R_{{\Lambda}_i})$ is strictly smaller than $s_{{\lambda}_{i}+1}$.
\end{itemize}
(Small) closed tubular neighborhood $N(S_{\lambda}) \subset {\mathbb{R}}^n$ of $S_{\lambda}$ in ${\mathbb{R}}^n$ is represented as the product bundle $S_{\lambda} \times \{s \mid s_{\lambda}-{\epsilon}_{\lambda} \leq s \leq  s_{\lambda}+{\epsilon}_{\lambda}\}$
 over $S_{\lambda}$, identified via the map mapping $x$ to $(x,0)$, with $S_{\lambda} \times \{0\}$, where ${\epsilon}_{\lambda}$ is a suitable small real number. We can also have $c_{\{S_{\lambda}\}_{\lambda \in {\Lambda}_S}}$ in such a way that the following hold.
\begin{itemize}
\item $c_{\{S_{\lambda}\}_{\lambda \in {\Lambda}_S}} {\mid}_{N(S_{\lambda})}$ has no critical point.
\item $c_{\{S_{\lambda}\}_{\lambda \in {\Lambda}_S}} {\mid}_{N(S_{\lambda})}$ is regarded to be the projection to the component $\{t \mid s_{\lambda}-{\epsilon}_{\lambda} \leq s_{\lambda} \leq  s_{\lambda}+{\epsilon}_{\lambda}\}$ naturally.
\item On the unbounded connected component of ${\mathbb{R}}^n-N(S_{{\lambda}_{o}})$, the values of $c_{\{S_{\lambda}\}_{\lambda \in {\Lambda}_S}}$ are always smaller than $s_{\lambda}-{\epsilon}_{\lambda}$.
\end{itemize}

STEP 2  $X_{{\Lambda}_{h_x},\{S_{\lambda}\}_{\lambda \in {\Lambda}_S},m_{S}, c_{\{S_{\lambda}\}_{\lambda \in {\Lambda}_S}}}$ in (\ref{thm:1.8}), defined by the relation\\ $X_{{\Lambda}_{h_x},\{S_{\lambda}\}_{\lambda \in {\Lambda}_S},m_{S}, c_{\{S_{\lambda}\}_{\lambda \in {\Lambda}_S}}} \bigcap {{\pi}_{m_2+n-1}}^{-1}(p)=\{p\} \times {h_X}^{-1}(c_{\{S_{\lambda}\}_{\lambda \in {\Lambda}_S}}(p))$\\ is an $m$-dimensional smooth connected submanifold with no boundary. \\

On the preimage ${{{\pi}_{m_2+n-1,n}} {\mid}_{X_{{\Lambda}_{h_x},\{S_{\lambda}\}_{\lambda \in {\Lambda}_S},m_{S}, c_{\{S_{\lambda}\}_{\lambda \in {\Lambda}_S}}}}}^{-1}(N(S_{\lambda}))$, the restriction ${\pi}_{} {\mid}_{X_{{\Lambda}_{h_x},\{S_{\lambda}\}_{\lambda \in {\Lambda}_S},m_{S}, c_{\{S_{\lambda}\}_{\lambda \in {\Lambda}_S}}}}$ is regarded to be a smooth map $\mathcal{A}$-equivalent to the product map of the restriction $h_X {\mid}_{{h_X}^{-1}(\{s \mid s_{\lambda}-{\epsilon}_{\lambda} \leq s \leq  s_{\lambda}+{\epsilon}_{\lambda}\})}$ and the identity map on $S_{\lambda}$.

On the non-empty preimage ${{\pi}_{m_2+n-1,n} {\mid}_{X_{{\Lambda}_{h_x},\{S_{\lambda}\}_{\lambda \in {\Lambda}_S},m_{S}, c_{\{S_{\lambda}\}_{\lambda \in {\Lambda}_S}}}}}^{-1}(R_{{\Lambda}_i})$, by the assumption that $h_X$ is proper and Ehresmann fibration theorem, it is regarded to be the projection of a product bundle over $R_{{\Lambda}_i}$.

Connectedness of $X_{{\Lambda}_{h_x},\{S_{\lambda}\}_{\lambda \in {\Lambda}_S},m_{S}, c_{\{S_{\lambda}\}_{\lambda \in {\Lambda}_S}}}$ is due to the assumption that the manifold $X$ of the domain $h_X$ is connected and the condition (\ref{thm:1.6}) with the property of $c_{\{S_{\lambda}\}_{\lambda \in {\Lambda}_S}}$. We add exposition on (\ref{thm:1.6}). In the conditions (\ref{thm:1.6.1}, \ref{thm:1.6.2}, \ref{thm:1.6.3}), each of the unions of the closures ${\overline{R_{{\Lambda}_i}}}^{{\mathbb{R}}^n}$ of all open and bounded connected regions $R_{{\Lambda}_i}$ of ${\mathbb{R}}^n-{\sqcup}_{\lambda \in {\Lambda}_S} S_{\lambda}$ satisfying the listed conditions and the image ${\pi}_{m_2+n-1,n}(X_{{\Lambda}_{h_x},\{S_{\lambda}\}_{\lambda \in {\Lambda}_S},m_{S}, c_{\{S_{\lambda}\}_{\lambda \in {\Lambda}_S}}})$ coincide. In the condition (\ref{thm:1.6.4}), the image and the union of the closures ${\overline{R_{{\Lambda}_i}}}^{{\mathbb{R}}^n}$ of all open and bounded connected regions $R_{{\Lambda}_i}$ of ${\mathbb{R}}^n-{\sqcup}_{\lambda \in {\Lambda}_S} S_{\lambda}$ coincide.

This completes the proof.
\end{proof}
We introduce several classes of fold maps. For a fold map $f:M \rightarrow N$ from an $m$-dimensional manifold $M$ to an $n$-dimensional manifold $N$, its singular set is an ($n-1$)-dimensional smooth (regular) submanifold of $M$ with no boundary and the restriction of $f$ to the singular . \\set is a smooth immersion. If the restriction is an embedding, then it is an {\it image simple} fold map. 
Special generic maps explained above are regarded to be image simple fold maps. If an image simple fold map is $\mathcal{A}$-equivalent to a fold map the image of whose singular set is of the form ${\sqcup}_{i=1}^l \{x=(x_1,\ldots x_n) \mid {\Sigma}_{j=1}^{n} {x_j}^2=i^2\} \subset {\mathbb{R}}^n$ with $n \geq 2$ ($l>0$), then it is a {\it round} fold map. If the image of the singular set of  a round fold map is of the form ${\sqcup}_{i=1}^l \{x=(x_1,\ldots x_n) \mid {\Sigma}_{j=1}^{n} {x_j}^2={r_i}^2\} \subset {\mathbb{R}}^n$ with $n \geq 2$ and an increasing sequence $\{r_i\}_{i=1}^l$ of positive real numbers ($l>0$), then it is a {\it normalized} round fold map. If $r_i=i$ before, then it is a {\it completely normalized} round fold map.

Every round fold map on an $m$-dimensional closed manifold $c:X \rightarrow {\mathbb{R}}^n$ ($m \geq n$) is $\mathcal{A}$-equivalent to a completely normalized one $c_0:X \rightarrow {\mathbb{R}}^n$ and we have its {\it page function}, defined as the restriction of the completely normalized round fold map to the preimage ${c_0}^{-1}(\{tx_0=t(x_{0,1} \ldots x_{0,n}) \mid t \geq \frac{1}{2}\})$ of a straight half line $\{tx_0=t(x_{0,1} \ldots x_{0,n}) \mid t \geq \frac{1}{2}\}$ ($x_0 \in S^{n-1}$) uniquely, up to $\mathcal{R}$-equivalence: this is independent of choice of $(c_0,x_0)$ (\cite{kitazawasaeki}) and depends only on the $\mathcal{A}$-equivalence class of a given round fold map.

A round fold map has been formulated by the author in \cite{kitazawa1, kitazawa2, kitazawa3} and pioneering construction is in \cite{kitazawa4}. Image simple fold maps have been formally defined in \cite{sadykovsaeki}, where such a class has been important in \cite{saeki3} for example.

By the definitions and rules, we immediately have Corollary \ref{cor:1}.
\begin{Cor}
\label{cor:1}
If the height function is a Morse function in Theorem \ref{thm:1}, then we have an image simple fold map ${\pi}_{m_2+n-1,n} {\mid}_{X_{{\Lambda}_{h_x},\{S_{\lambda}\}_{\lambda \in {\Lambda}_S},m_{S}, c_{\{S_{\lambda}\}_{\lambda \in {\Lambda}_S}}}}$.
\end{Cor}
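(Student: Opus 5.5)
We argue locally on the image ${\mathbb{R}}^n$, using the decomposition into the closed tubular neighborhoods $N(S_{\lambda})$ and the open regions $R_{{\Lambda}_i}$ from STEP 1 of the proof of Theorem \ref{thm:1}. Write $X^{\prime}:=X_{{\Lambda}_{h_x},\{S_{\lambda}\}_{\lambda \in {\Lambda}_S},m_{S}, c_{\{S_{\lambda}\}_{\lambda \in {\Lambda}_S}}}$ and $f:={\pi}_{m_2+n-1,n} {\mid}_{X^{\prime}}$. Over a region $R_{{\Lambda}_i}$ minus the tubular neighborhoods, $c_{\{S_{\lambda}\}_{\lambda \in {\Lambda}_S}}$ takes values strictly between consecutive critical values $s_{{\lambda}_j}$ of $h_X$, by the image conditions in STEP 1. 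STEP 2 shows that $f$ is the projection of a product bundle there, so $f$ has no singular point over these sets.

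Over $N(S_{\lambda})\cong S_{\lambda}\times [s_{\lambda}-{\epsilon}_{\lambda},s_{\lambda}+{\epsilon}_{\lambda}]$, STEP 2 shows that $f$ is $\mathcal{A}$-equivalent to the product of $h_X {\mid}_{{h_X}^{-1}([s_{\lambda}-{\epsilon}_{\lambda},s_{\lambda}+{\epsilon}_{\lambda}])}$ with the identity on $S_{\lambda}$. Here $s_{\lambda}$ means the critical value $s_{m_S(\lambda)}$. Since $h_X$ is Morse, the first factor is locally a Morse function near its critical points, and a regular function elsewhere. Choosing local charts of $S_{\lambda}$, which is an open set in ${\mathbb{R}}^{n-1}$, $f$ is locally the product of a Morse function and an identity map. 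Hence $f$ is a fold map, and $S(f)$ corresponds to $S(h_X)\cap {h_X}^{-1}(s_{\lambda})$ times $S_{\lambda}$. For $n=1$ this already is the statement.

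For image simplicity, we show that $f {\mid}_{S(f)}$ is an embedding. Over $N(S_{\lambda})$, $f(S(f))\cap N(S_{\lambda})=S_{\lambda}\times\{0\}$. The singular set there is $(S(h_X)\cap {h_X}^{-1}(s_{\lambda}))\times S_{\lambda}$, and $f$ restricted to it is the projection onto $S_{\lambda}$. This restriction is injective only if each critical level ${h_X}^{-1}(s_{\lambda})$ contains exactly one critical point.

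I expect this step to be the main obstacle. The statement implicitly needs this, or a reading of "image simple" under which each component of $S(f)$ embeds. Each component $\{q\}\times S_{\lambda}$ is mapped diffeomorphically onto $S_{\lambda}$. Distinct components either lie over distinct, disjoint $S_{\lambda}$, or over the same $S_{\lambda}$ when several critical points share a level.

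My approach is to first check that $f {\mid}_{S(f)}$ is an injective immersion on each component. Since $S(f)$ is closed and $f$ is proper, an injective immersion is an embedding. I would then invoke the definitions, as the paper indicates ("by the definitions and rules"). If genuine injectivity is required, I would add the genericity hypothesis that the Morse function $h_X$ has distinct critical values on distinct critical points. This can be arranged by perturbing $h_X$, or by noting that the critical value set in Theorem \ref{thm:1} is indexed by ${\Lambda}_{h_x}$ with one critical point per value.
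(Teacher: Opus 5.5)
Your local analysis matches the paper's, whose entire proof is ``by the definitions and rules''. STEP 2 of Theorem~\ref{thm:1} presents $f$ as a bundle projection over each $R_{{\Lambda}_i}$, and as $h_X|_{{h_X}^{-1}([s_{\lambda}-{\epsilon}_{\lambda},s_{\lambda}+{\epsilon}_{\lambda}])}\times \mathrm{id}_{S_{\lambda}}$ over $N(S_{\lambda})$. With $h_X$ Morse, this gives the fold property exactly as you argue.

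The obstacle you flag is real, and it is a defect of the statement rather than of your argument. Under the paper's definition ($f|_{S(f)}$ an embedding), the corollary fails whenever a critical level carries two critical points. Here is a concrete instance:
\begin{itemize}
\item Let $X\subset\mathbb{R}^3$ be a sphere on which ${\pi}_{3,1}|_X$ is Morse, with a minimum at $s_1$, a saddle at $s_2$, and two maxima at the same value $s_3$.
\item Let $n=2$, and let $S_1,S_2,S_3$ be concentric circles of radii $3,2,1$ with $m_S(S_j)=j$.
\item Let $m_{S,0}=2$ on the inner disk; this is forced by condition (5).
\end{itemize}
Conditions (1)--(6) all hold; the union in (6)(a) is the closed annulus $1\le |x|\le 3$, which is connected. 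Yet $S(f)$ contains two circles over $S_3$, each mapped diffeomorphically onto $S_3$. So $f|_{S(f)}$ is two-to-one there, and $f$ is not image simple. The paper's ``immediately'' overlooks this.

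Where your proposal goes wrong is in the ways you suggest to get around it.
\begin{itemize}
\item Perturbing $h_X$ is not available. The corollary concerns the map built from the given $h_X$, and a perturbation changes $X$, hence $X^{\prime}$ and $f$.
\item The indexing by ${\Lambda}_{h_x}$ does not give one critical point per value. It indexes the critical \emph{values} $h_X(S(h_X))$ and nothing more.
\item Requiring only that each component of $S(f)$ embed is not the definition of image simple used here.
\item The case $n=1$ is not ``already the statement''. There $S(f)$ is discrete, and the same injectivity issue arises.
\end{itemize}

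You should instead commit to the extra hypothesis that $h_X|_{S(h_X)}$ is injective. With it, your argument closes:
\begin{itemize}
\item There are no singular points over ${\mathbb{R}}^n-{\sqcup}_{\lambda} S_{\lambda}$, since $c_{\{S_{\lambda}\}_{\lambda \in {\Lambda}_S}}$ takes only regular values of $h_X$ there.
\item Over $N(S_{\lambda})$, the singular set is $S_{\lambda}\times\{q\}$, where $q$ is the unique critical point at level $s_{m_S(\lambda)}$. It maps diffeomorphically onto $S_{\lambda}$.
\item The $S_{\lambda}$ are pairwise disjoint, so $f|_{S(f)}$ is an injective immersion.
\item It is an embedding because $S(f)$ is closed and $f$ is proper: $f^{-1}(K)$ is a closed subset of $K\times X$, and $X$ is compact.
\end{itemize}
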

Corollary \ref{cor:2} is shown by applying fundamental arguments on approximation and real algebraic geometry.
\begin{Cor}
\label{cor:2}
In Theorem \ref{thm:1}, suppose that $h_X$ is a real algebraic function ${\pi}_{m_2,1} {\mid}_{X}$.
In this case, for any non-negative integer $r \geq 0$, we have another real algebraic map ${\pi}_{m_2+n-1,n} {\mid}_{X_{{\rm R.A.},r,{\Lambda}_{h_x},\{S_{\lambda}\}_{\lambda \in {\Lambda}_S},m_{S}, c_{\{S_{\lambda}\}_{\lambda \in {\Lambda}_S}}}}$ which is real algebraic and which is $(C^r,\mathcal{A})$-equivalent to the original smooth map ${\pi}_{m_2+n-1,n} {\mid}_{X_{{\Lambda}_{h_x},\{S_{\lambda}\}_{\lambda \in {\Lambda}_S},m_{S}, c_{\{S_{\lambda}\}_{\lambda \in {\Lambda}_S}}}}$. 
In addition, in the case $X$ is the zero set of some real polynomial map from ${\mathbb{R}}^{m_2}$ to ${\mathbb{R}}^{m_2-m_1}$, we can also choose $X_{{\rm R.A.},r,{\Lambda}_{h_x},\{S_{\lambda}\}_{\lambda \in {\Lambda}_S},m_{S}, c_{\{S_{\lambda}\}_{\lambda \in {\Lambda}_S}}}$ to be the zero set of some real polynomial map from ${\mathbb{R}}^{m_2+n-1}$ to ${\mathbb{R}}^{m_2-m+n-1}$.
\end{Cor}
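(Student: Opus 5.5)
We need to prove Corollary \ref{cor:2}: when $h_X$ is real algebraic, the smooth map ${\pi}_{m_2+n-1,n}{\mid}_{X_{{\Lambda}_{h_x},\ldots}}$ of Theorem \ref{thm:1} can be replaced by a real algebraic map that is $(C^r,\mathcal{A})$-equivalent to it. The plan has three steps. First rewrite the manifold of (\ref{thm:1.8}) as the zero set of an explicit map. Then approximate the smooth function $c:=c_{\{S_{\lambda}\}_{\lambda \in {\Lambda}_S}}$ by a polynomial. Finally transfer the equivalence by a regular-value and isotopy argument.

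\textbf{Step 1 (the defining map).} Write $X$ as a union of connected components of the zero set of a polynomial map $F_X:{\mathbb{R}}^{m_2}\rightarrow {\mathbb{R}}^{m_2-m_1}$ such that $F_X$ has no singular point on $X$. Use coordinates $(p,y)\in{\mathbb{R}}^n\times{\mathbb{R}}^{m_2-1}$ on ${\mathbb{R}}^{m_2+n-1}$, where $y=(y_2,\ldots,y_{m_2})$, and set $\tilde y:=(c(p),y)\in{\mathbb{R}}^{m_2}$. By the defining relation in (\ref{thm:1.8}), $X_{{\Lambda}_{h_x},\ldots}$ is the union of the corresponding components of the zero set of
$$G_c(p,y):=F_X(\tilde y).$$
The first coordinate of $\tilde y$ is forced to equal $c(p)$, and the point lies in $X$ with $h_X(\tilde y)=c(p)$. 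The map $G_c$ is a submersion along $X_{{\Lambda}_{h_x},\ldots}$: the partial derivatives of $G_c$ in the $y$ and $p$ directions recover $dF_X$ composed with the map $(v,w)\mapsto (dc(v),w)$, whose image contains $T\tilde y$-directions spanning a complement of $TX$. This is where $dc\neq 0$ near each $S_\lambda$, from the tubular-neighbourhood normal form in STEP~1, is needed at critical levels of $h_X$.

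\textbf{Step 2 (polynomial approximation).} The image of $X_{{\Lambda}_{h_x},\ldots}$ under ${\pi}_{m_2+n-1,n}$ is the union of closures of bounded regions, hence compact in the cases (\ref{thm:1.6.1})--(\ref{thm:1.6.3}). The relevant part of $c$ therefore lives on a compact set $K$. Choose a polynomial $P$ that is $C^{r+1}$-close to $c$ on a neighbourhood of $K$ (Weierstrass/Nachbin approximation). Replacing $c$ by $P$ gives $G_P$, a polynomial map ${\mathbb{R}}^{m_2+n-1}\rightarrow{\mathbb{R}}^{m_2-m_1}={\mathbb{R}}^{m_2-m+n-1}$. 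Take $X_{{\rm R.A.},r,\ldots}$ to be the component(s) of its zero set lying over a neighbourhood of $K$. If $X$ is the full zero set of $F_X$, we must also make sure no extra components appear. For this I would add a term forcing $P$ to stay below $\min h_X$ (or outside $h_X(X)$) away from $K$, for instance $P-\delta|p|^{2N}$, so the whole zero set is the desired manifold.

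\textbf{Step 3 (equivalence).} Transversality of $G_c$ is an open condition. By Step~1 and compactness of the relevant parts, $G_{P}$ is transverse to $0$ near the approximated set for $P$ close enough. The straight-line homotopy $c_t=(1-t)c+tP$ then gives a family of manifolds $X_t$, each fibred over ${\mathbb{R}}^n$ compatibly with ${\pi}_{m_2+n-1,n}$. By the isotopy/Thom argument, $X_0$ and $X_1$ are $C^r$-diffeomorphic. To obtain a diffeomorphism of the target as well, first $C^{r+1}$-isotope the singular value set $\{c=s_\lambda\}$ onto $\{P=s_\lambda\}$ by an ambient diffeomorphism ${\phi}_Y$ of ${\mathbb{R}}^n$. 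This is possible because these level sets are regular, by the normal form of $c$ near $N(S_\lambda)$. Then lift ${\phi}_Y$ fibrewise using the implicit function theorem.

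\textbf{Main obstacle.} The hardest point is making the diffeomorphism of domains commute with ${\phi}_Y$ up to $C^r$, especially near critical levels where the fibres change type. I would first arrange ${\phi}_Y^{*}P=c$ exactly near each $S_\lambda$, using the regular-level normal form together with a Moser-type correction. After that the maps agree literally there, and elsewhere the fibrations are trivial bundles.
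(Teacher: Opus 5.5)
Your proposal is correct and follows the same route as the paper: approximate $c$ on a compact neighbourhood of the image by a polynomial $P$ that keeps the properties listed in the proof of Theorem \ref{thm:1}, then define the new manifold by the same fibrewise relation, i.e.\ as the zero set of $F_X(P(p),y)$. The paper only cites the approximation results of Lellis and Koll\'ar for this. You instead supply three things explicitly: the submersion check, the term forcing $P$ below $\min h_X$ away from the image, and the path-method construction of the $\mathcal{A}$-equivalence. For the second of these, $P-\delta|p|^{2N}$ does not work as stated; use a scaled version such as $P-(|p|/R)^{2N}$, with $N$ chosen large after $P$ is fixed.
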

\begin{proof}
We can approximate the presented function $c_{\{S_{\lambda}\}_{\lambda \in {\Lambda}_S}}$ in the so-called $C^r$-topology and have a real polynomial function $c_{{\rm R.A.},r,\{S_{\lambda}\}_{\lambda \in {\Lambda}_S}}$ satisfying
 the conditions presented in Proof of Theorem \ref{thm:1}. 
For this, apply arguments in \cite{lellis} (, especially \cite[Proposition 1.3.7]{lellis} and arguments there,) and apply also ones of \cite[Sepcial Case 5]{kollar}.
We can have a desired objects in the similar way.

We can also discuss the case $X$ is the zero set of some real polynomial map from ${\mathbb{R}}^{m_2}$ to ${\mathbb{R}}^{m_2-m_1}$.

This completes the proof.
\end{proof}
We present an explicit case for Corollaries \ref{cor:1} and \ref{cor:2} as Example \ref{ex:1}, where we  do not need to understand the content rigorously.

For this, we explain Reeb digraphs of Morse(-Bott) functions on compact manifolds shortly.
Given such a function $c:X \rightarrow \mathbb{R}$ on a compact and connected manifold $X$. If the boundary of $X$ is non-empty, then we pose the condition that the level set $c^{-1}(m_c)$ of its minimum $m_c$ and its boundary coincide.
The Reeb digraph $\overrightarrow{R_c}$ can be defined as the digraph as follows.
\begin{itemize}
\item The underlying space is the quotient space where the equovalence relation on $X$ is as follows. Two points of $X$ are equivalent if and only if they are in a same connected component of a same level set $c^{-1}(t)$.
\item We define the quotient map $q_c:X \rightarrow \overrightarrow{R_c}$. A point $v$ is a vertex of the digraph if and only if ${q_c}^{-1}(v)$ contains some critical point of $c$.
\item We also have the unique continuous function $\bar{c}:\overrightarrow{R_c} \rightarrow \mathbb{R}$ satisfying $c=\bar{c} \circ q_c$. Each edge $e$ connects distinct two vertices $v_{e,1}$ and $v_{e,2}$ and it is oriented from $v_{e,1}$ to $v_{e,2}$ if and only if $\bar{c}(v_{e,1})<\bar{c}(v_{e,2})$.

\end{itemize}
For such graphs, see \cite{reeb} as a pioneering study. See \cite{izar} for the case of Morse functions. See \cite{martinezalfaromezasarmientooliveira} for the  case of Morse-Bott functions.
\begin{Ex}
\label{ex:1}

According to \cite{kitazawasaeki}, round fold maps on closed and connected manifolds of dimension $m$ into ${\mathbb{R}}^{m-1}$ with $m \geq 5$ are classified by $\mathcal{R}$-equivalence classes of their page Morse functions, up to $\mathcal{A}$-equivalence. 
In \cite{kitazawa7}, each of these round fold maps whose  page functions $c_{\rm P}:X_{\rm P} \rightarrow \mathbb{R}$ are ones on compact, connected and orientable surfaces and whose Reeb digraphs are naturally realized as the compositions
 $\bar{c_{\rm P}}$ of embeddings $e_{\rm P}:\overrightarrow{R_{{\rm P}}} \rightarrow {\mathbb{R}}^3$ with ${\pi}_{3,1}$ is regarded to be a normalized one and represented as a real algebraic map. This is also regarded to be an explicit case of Theorem \ref{thm:1}. 

Note that according to \cite{izar,martinezalfaromezasarmientooliveira}, $\mathcal{R}$-equivalence classes of Morse-Bott functions on compact, connected and orientable surfaces such that the values at distinct connected components of their critical sets are distinct are determined by natural isomorphism classes of their Reeb digraphs.
\end{Ex}
\subsection{Another main result and related examples.}
This is a generalization of \cite{kitazawa8}. 
\begin{Def}
\label{def:2}
Let $m_1<m_2$ be positive integers. Given the following objects.
\begin{enumerate}
\item  (A closed region $E_{F_I} \subset {\mathbb{R}}^n$ foliated by a smooth function $F_I:{\mathbb{R}}^n \times I \rightarrow \mathbb{R}$.)
Let $I$ be the interval of the form $I=\{t \mid t \geq a_0\}$ or $I:=\{t \mid a_1 \leq t \leq a_2\}$ with $a_1<a_2$.
Let $F_I:{\mathbb{R}}^n \times I \rightarrow \mathbb{R}$ be a smooth function satisfying the following.
\begin{itemize}
\item We define $F_{I,t}:{\mathbb{R}}^n \rightarrow \mathbb{R}$ by $F_{I,t}(x):=F_I(x,t)$ for $t \in I$.
\item ${F_{I,t_1}}^{-1}(0)$ and ${F_{I,t_2}}^{-1}(0)$ are disjoint for distinct two numbers $t_1$ and $t_2$.
\item $E_{F_I}={\bigcup}_{t \in I} {F_{I,t}}^{-1}(0)$ and is non-empty.
\item The restriction of $F_{I,a_j}$ to ${\mathbb{R}}^n$ has no critical point for $j=0,1,2$.
\end{itemize}
\item A real number $b$ and a smooth function $s_I:I \rightarrow \mathbb{R}$ satisfying $I=\{t \mid s_I(t) \geq b\}$. We also assume that $\{t \mid s_I(t)>b\}=I-\{a_0\}$ in the case $I=\{t \mid t \geq a_0\}$ and that $\{t \mid s_I(t)>b\}=I-\{a_1,a_2\}$ in the case $I:=\{t \mid a_1 \leq t \leq a_2\}$. The value of the derivative of the function at each $a_j$ is not $0$.
\item (A proper height function $h_X:={{\pi}_{m_2,1}} {\mid}_{X}:X \rightarrow {\mathbb{R}}$ on an $m_1$-dimensional submanifold of ${\mathbb{R}}^{m_2}$.)
A proper height function $h_X:={{\pi}_{m_2,1}} {\mid}_{X}:X \rightarrow {\mathbb{R}}$ on an $m_1$-dimensional submanifold of ${\mathbb{R}}^{m_2}$ whose values are always greater than or equal to $b$, whose image
 $h_X(X)$ contains $s_I(I)$ and the image of whose singular set is closed and discrete in $\mathbb{R}$.

\end{enumerate}  
We call the data $(F_I,E_{F_I},s_I)$ an {\it $n$-admissible foliated base} or an {\it $n$-AFB}. We call $h_X$ an {\it {\rm (}$m_1,m_2${\rm )}-admissible} function on $(F_I,E_{F_I},s_I)$.
\end{Def}

\begin{Thm}
\label{thm:2}
Suppose that an $n$-AFB $(F_I,E_{F_I},s_I)$ and an $(m_1,m_2)$-admissible function $h_X$ on $(F_I,E_{F_I},s_I)$ are given.

We can define a subset $X_{(F_I,E_{F_I},s_I,h_X),m}$ of ${\mathbb{R}}^{m+m_2-m_1+1}$ with $m:=m_1+n-1$ by the relation
$X_{(F_I,E_{F_I},s_I,h_X),m} \bigcap {{\pi}_{m_2+n,n+1}}^{-1}(x,t)=\{x\} \times \{t\} \times {h_X}^{-1}(s(t))$ {\rm (}$x \in E_{F_I}${\rm )}.
Suppose the following in addition.
\begin{itemize}
\item At each point $(x_0,t_0,y_0) \subset X_{(F_I,E_{F_I},s_I,h_X),m} \bigcap (\{x_0  \in E_{F_I}\} \times \{t_0\} \times {h_X}^{-1}(s(t_0)))$ where $s_I(t_0)$ is not a critical value of $h_X$, the value of the derivative $\frac{\partial F_I}{\partial t}$ at $(x,t)=(x_0,t_0)$ is not $0$.

\item At each point $(x_0,t_0,y_0) \subset X_{(F_I,E_{F_I},s_I,h_X),m} \bigcap (\{x_0  \in E_{F_I}\} \times \{t_0\} \times {h_X}^{-1}(s(t_0)))$ where $s_I(t_0)$ is a critical value of $h_X$, both the following hold.
\begin{itemize}
\item The value of the derivative $\frac{\partial F_I}{\partial t}$ at $(x,t)=(x_0,t_0)$ is not $0$.
\item The value of the derivative $\frac{\partial F_I}{\partial x}$ at $(x,t)=(x_0,t_0)$ is not $0$, for some $x_i$, where $x:=(x_1,\ldots x_n)$.
\item $s_I(t_0)$ is not a critical value of $s_I$.
\end{itemize}
\end{itemize}

In this situation, we have an $m$-dimensional smooth submanifold $X_{(F_I,E_{F_I},s_I,h_X),m}$  of ${\mathbb{R}}^{m+m_2-m_1+1}$ with no boundary.
\end{Thm}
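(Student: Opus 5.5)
We realize $X_{(F_I,E_{F_I},s_I,h_X),m}$ as the zero set of a smooth map and apply the implicit function theorem pointwise. Set
\[
\Phi(x,t,y):=\bigl(F_I(x,t),\; s_I(t)-y_1,\; G(y)\bigr),
\]
where $y=(y_1,\ldots,y_{m_2})\in{\mathbb{R}}^{m_2}$ and $G$ is a local defining map of $X$ near a given point, with values in ${\mathbb{R}}^{m_2-m_1}$ and of full rank on $X$. The relation defining $X_{(F_I,E_{F_I},s_I,h_X),m}$ says exactly the following. A point $(x,t,y)$ lies in the set if and only if $t \in I$, $F_I(x,t)=0$ (so $x\in E_{F_I}$ lies on the leaf indexed by $t$), $y\in X$ and $h_X(y)=y_1=s_I(t)$. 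Since the leaves ${F_{I,t}}^{-1}(0)$ are mutually disjoint, $t$ is determined by $x$. Locally the set is therefore $\Phi^{-1}(0)$, with $m_2-m_1+2$ equations in the $n+1+m_2$ variables. The expected dimension is $n+1+m_2-(m_2-m_1+2)=m_1+n-1=m$, as claimed. It then suffices to show that $d\Phi$ has rank $m_2-m_1+2$ at each point, so that each point has a neighbourhood in which the set is an $m$-dimensional submanifold.

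The rank check splits into two cases, following the hypotheses. First suppose $s_I(t_0)$ is a regular value of $h_X$. Then at $y_0$ the differentials of $G$ and of $y_1$ restricted to $X$ are independent, so the last $m_2-m_1+1$ rows of $d\Phi$ have full rank using only the $y$-columns. The row of $F_I$ contains the entry $\partial F_I/\partial t\neq 0$. Reordering, we choose the $t$-column for $F_I$ and $y$-directions for the rest. The $y$-block is independent of $t$, and the $t$-column contributes $\partial F_I/\partial t$ together with $s_I'(t_0)$ in the second row. One more step is needed, because $s_I'(t_0)$ could be nonzero. Instead we pivot on the $F_I$-row via its $x$-derivative if available. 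Otherwise we use the $t$-column for $F_I$ and note that the remaining $(y)$-block already has full rank independently, which makes the full matrix block triangular after column operations. Hence the rank is full.

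Now suppose $s_I(t_0)$ is a critical value and $y_0$ may be a critical point of $h_X$, so $dy_1$ is dependent on $dG$ at $y_0$. The hypotheses then give $\partial F_I/\partial x_i\neq 0$ for some $i$, and $s_I'(t_0)\neq 0$. We use the $x_i$-column to pivot the $F_I$-row and the $t$-column to pivot the row $s_I(t)-y_1$. Since $G$ has full rank in $y$, the resulting matrix is block triangular with nonzero diagonal blocks, so it has full rank. The condition on $\partial F_I/\partial t$ is not needed here.

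The main obstacle is at the ends of $I$, namely $t=a_0$, $a_1$ or $a_2$, where $s_I(t)=b$. Points with $t$ outside $I$ are not defined, so one must check that no boundary appears. There $h_X\ge b$, so $b$ is either the minimum of $h_X$ or not attained. The hypothesis $s_I'(a_j)\neq 0$ lets us use the $t$-column in the second case, while $F_{I,a_j}$ has no critical point, which supplies an $x$-pivot for the $F_I$-row. Near such an endpoint, $t$ cannot leave $I$ in the zero set: $s_I(t)\ge b$ is forced by $y_1=h_X(y)\ge b$, and $\{s_I\ge b\}=I$ near $a_j$ because $s_I'(a_j)\neq0$. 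So the local zero set of $\Phi$, taken with $t$ in an open interval around $a_j$, coincides with our set, and there is no boundary. Finally, properness of $h_X$ and closedness of $E_{F_I}$ ensure the set is closed, so it is an embedded submanifold without boundary.
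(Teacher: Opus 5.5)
Your proof is correct, but it is organized differently from the paper's. The paper never writes down a defining map. In Case 2-A it uses $\partial F_I/\partial t\neq 0$ to make $t$ locally a function of $x$, and it uses properness of $h_X$ (Ehresmann) to see the set locally as a product bundle. In Case 2-B it uses $\partial F_I/\partial x_i\neq 0$ to make $F_{I,t_0}^{-1}(0)$ a hypersurface near $x_0$, and $s_I'(t_0)\neq 0$ to conclude that $\pi_{m_2+n,n}$ restricted to the set is locally $\mathcal{A}$-equivalent to a restriction of $h_X$ times the identity on that hypersurface. You instead exhibit the set locally as $\Phi^{-1}(0)$ and check that $d\Phi$ is surjective via block-triangular maximal minors.

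The paper's route gives a local normal form for the projection, which is what the fold-map applications such as Corollary~\ref{cor:3} need. That normal form is also the natural home for the full list of hypotheses, in particular $\partial F_I/\partial t\neq 0$ at critical levels. Your route gives only the submanifold statement, but it is complete and checkable, and it handles the ends of $I$ explicitly. Your observation that the extended $s_I$ is $<b\le h_X$ just outside $I$ is exactly why no boundary appears; the paper leaves this implicit in Case 2-B. It also shows that, for this statement, $\partial F_I/\partial t\neq 0$ is superfluous at critical levels, and at regular levels only $(\partial_x F_I,\partial_t F_I)\neq 0$ is used.

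Some small clean-ups:
\begin{itemize}
\item In Case A the detour about $s_I'(t_0)$ is unnecessary. The $F_I$-row vanishes in the $y$-columns, so the minor on the $t$-column and a suitable set of $y$-columns is already block lower triangular with determinant $(\partial F_I/\partial t)\cdot\det B$.
\item You work in $\mathbb{R}^{n+1+m_2}$ rather than the stated $\mathbb{R}^{m+m_2-m_1+1}=\mathbb{R}^{n+m_2}$. This is harmless, since $y_1=s_I(t)$ is a graph coordinate and can be discarded.
\item Defining $\Phi$ near $t=a_j$ needs smooth extensions of $F_I$ and $s_I$ past the ends of $I$.
\item Your last sentence should cite closedness of $\{F_I=0\}\subset\mathbb{R}^n\times I$ rather than of $E_{F_I}$. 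In any case, closedness is not needed for the conclusion.
\end{itemize}
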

\begin{proof}
The present exposition is essentially similar and a bit different from exposition and arguments from \cite{kitazawa5, kitazawa6, kitazawa8}.

We investiagte local coordinates for points of $X_{(F_I,E_{F_I},s_I,h_X),m}$.

\ \\
Case 2-A  At each point $(x_0,t_0,y_0) \subset X_{(F_I,E_{F_I},s_I,h_X),m} \bigcap (\{x_0 \} \times \{t_0\} \times {h_X}^{-1}(s(t_0)))$ where $s_I(t_0)$ is not a critical value of $h_X$,. \\

We investigate a small open neighborhood $U_{x_0}$ of $x_0$ in ${\mathbb{R}}^n$ with ${\overline{U_{x_0}}}^{{\mathbb{R}}^n}$ being compact. The value of the derivative $\frac{\partial F_I}{\partial t}$ at $(x,t)=(x_0,t_0)$ is not $0$. 
A proper height function $h_X:={{\pi}_{m_2,1}} {\mid}_{X}:X \rightarrow {\mathbb{R}}$ on an $m_1$-dimensional submanifold of ${\mathbb{R}}^{m_2}$ is given.

As in the proof of Theorem \ref{thm:1}, the restriction of the map ${\pi}_{m_2+n,n} {\mid}_{X_{(F_I,E_{F_I},s_I,h_X),m}}$ to the preimage ${{\pi}_{m_2+n,n}}^{-1}(U_{(x_0,s_I(t_0))})$ is regarded to be the projection of a smooth product bundle. Due to the fact that the value of the derivative $\frac{\partial F_I}{\partial t}$ at $(x,t)=(x_0,t_0)$ is not $0$, we have a smooth embedding $X_{(F_I,E_{F_I},s_I,h_X),m} \subset {\mathbb{R}}^{m+m_2-m_1+1}$ locally and canonically. \\
\ \\
Case 2-B At each point $(x_0,t_0,y_0) \subset X_{(F_I,E_{F_I},s_I,h_X),m} \bigcap (\{x_0\} \times \{t_0\} \times {h_X}^{-1}(s(t_0)))$ where $s_I(t_0)$ is a critical value of $h_X$. \\

We investigate a small open neighborhood $U_{x_0}$ of $x_0$ in ${\mathbb{R}}^n$ with ${\overline{U_{x_0}}}^{{\mathbb{R}}^n}$ being compact. The value of the derivative $\frac{\partial F_I}{\partial t}$ at $(x,t)=(x_0,t_0)$ is not $0$. The value of the derivative $\frac{\partial F_I}{\partial x}$ at $(x,t)=(x_0,t_0)$ is not $0$, for some $x_i$, where $x:=(x_1,\ldots x_n)$, and due to implicit function theorem, the intersection $U_{x_0} \bigcap {F_{I,t_0}}^{-1}(0)$ is a (regular) smooth submanifold of dimension $n-1$ in $U_{x_0}$ and has no boundary. $s_I(t_0)$ is not a critical value of $s_I$. From this, the restriction of the map ${\pi}_{m_2+n,n} {\mid}_{X_{(F_I,E_{F_I},s_I,h_X),m}}$ to the preimage ${{\pi}_{m_2+n,n}}^{-1}(U_{x_0})$ is $\mathcal{A}$-equivalent to the product map of a smooth function being the restrcition of $h_X$ to a suitable $m_1$-dimensional smooth submanifold of $X$ and the identity map on
 $U_{x_0} \bigcap {F_{I,t_0}}^{-1}(0)$. \\
\ \\
Case 2-A and Case 2-B complete the proof.
\end{proof}

\begin{Rem}
\label{rem:1}
In \cite{kitazawa8}, we consider a specific case which is of simplest cases. More precisely, the case is explained as the case of $h_X:={{\pi}_{m_1+1,1}} {\mid}_{S^{m_1}}:S^{m_1} \rightarrow \mathbb{R}$ or the height function ${\pi}_{m_1+1,1} {\mid}_{A_{S^{m_1}}}$ of $A_{S^{m_1}}$ obtained as the image $A_{S^{m_1}}$ of $S^{m_1}$ by an affine transformation of ${\mathbb{R}}^{m_1+1}$ ($m_2:=m_1+1$).

We do not discuss more explicit and precise description in the situation of Theorem \ref{thm:2}. This is left to readers.
\end{Rem}

\begin{Def}
\label{def:3}
In Theorem \ref{thm:2}, we say the quadruple $(F_I,E_{F_I},s_I,h_X)$ is a {\it very admissible system of a foliated region and a height of type $(n,m_1,m_2)$} or an {\it ${\rm (}n,m_1,m_2{\rm )}$-VA-FR-H-system}.
\end{Def}

We present Corollary \ref{cor:3}. Remark \ref{rem:1} can be regarded to be a specific case of Theorem \ref{thm:2} and its specific case for certain round fold maps is generalized and formulated as Corollary \ref{cor:3}. Corollary \ref{cor:3} is also essentially a kind of simply generalized versions of \cite[Theorem 3]{kitazawa8} (\cite[Example 2 or Example 3]{kitazawa8}). We omit its precise proof and the proof is left to readers as a kind of fundamental exercises.

\begin{Cor}
\label{cor:3}

\begin{enumerate}
\item Let $I:=\{t \geq 0\}$, $a>0$, $b \in \mathbb{R}$, $F_{I,t}(x):=a-t-{\Sigma}_{j=1}^2 {x_j}^2$ {\rm (}$x=(x_1,x_2)${\rm )}, and $s_I(t)=t+b$. Let $h_X$ be a smooth proper height function on a manifold $X$ satisfying the following.
\begin{itemize}
\item $h_X$ has the minimum $b$ and $s_I(I) \subset h_X(X)$.
\item The singular value set $h_X(S(h_X))$ of $h_X$ is closed and discrete in $\mathbb{R}$.
\item $a+b$ is not a critical value of $s_I$.
\end{itemize}
 We have an ${\rm (}n,m_1,m_2{\rm )}$-VA-FR-H-system $(F_I,E_{F_I},s_I,h_X)$.

\item Let $I:=\{a_1 \leq t \leq a_2\}$ with $a_2>a_1>0$, $b \in \mathbb{R}$, $F_{I,t}(x):=t^2-{\Sigma}_{j=1}^2 {x_j}^2$ {\rm (}$x=(x_1,x_2)${\rm )}, and $s_I(t)=(t-a_1)(a_2-t)+b$. Let $h_X$ be a smooth proper height function on a manifold $X$ satisfying the following.
\begin{itemize}
\item $h_X$ has the minimum $b$ and $s_I(I) \subset h_X(X)$.
\item The singular value set $h_X(S(h_X))$ of $h_X$ is closed and discrete in $\mathbb{R}$.
\item $\frac{{(a_2-a_1)}^2}{4}+b$ is not a critical value of $h_X$.
\end{itemize}
 We have an ${\rm (}n,m_1,m_2{\rm )}$-VA-FR-H-system $(F_I,E_{F_I},s_I,h_X)$.

\end{enumerate}
In both cases, we have normalized round fold maps ${\pi}_{m+m_2-m_1+1,n} {\mid}_{X_{(F_I,E_{F_I},s_I,h_X),m}}:X_{(F_I,E_{F_I},s_I,h_X),m} \rightarrow {\mathbb{R}}^n$ for $n \geq 2$.
\end{Cor}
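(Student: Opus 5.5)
The plan is to check directly that, in each of the two cases, the quadruple $(F_I,E_{F_I},s_I,h_X)$ satisfies Definition \ref{def:2} and the extra hypotheses of Theorem \ref{thm:2}. Theorem \ref{thm:2} then gives the submanifold $X_{(F_I,E_{F_I},s_I,h_X),m}$, and it remains to identify the projection ${\pi}_{m+m_2-m_1+1,n}$ restricted to it as a normalized round fold map. Throughout we take $n=2$, the dimension used in the formulas.

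\emph{Case (1).} For $t\geq 0$ the zero set ${F_{I,t}}^{-1}(0)$ is the circle of radius $\sqrt{a-t}$ for $t<a$ and the origin for $t=a$. These sets are disjoint for distinct $t$, and their union is the disk $E_{F_I}$ of radius $\sqrt a$. The index set $I$ has to be read as $[0,a]$, so that $s_I$ is monotone with $s_I(0)=b$ at the boundary circle. We then verify the hypotheses of Theorem \ref{thm:2}:
\begin{itemize}
\item $\frac{\partial F_I}{\partial t}=-1\neq 0$ everywhere.
\item $\frac{\partial F_I}{\partial x_j}=-2x_j$ vanishes only at the centre $x=0$, that is, only at $t=a$. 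The hypothesis that $a+b$ is not a critical value is exactly what is needed there (it should be read as concerning $h_X$), so that $h_X^{-1}(s_I(a))$ is a smooth level set near the centre.
\item $s_I'=1\neq 0$, so $s_I(t_0)$ is never a critical value of $s_I$.
\end{itemize}

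\emph{Case (2).} The zero set ${F_{I,t}}^{-1}(0)$ is the circle of radius $t$, and $E_{F_I}$ is the annulus $a_1\leq |x|\leq a_2$. We have $\frac{\partial F_I}{\partial t}=2t\neq 0$, and $\partial F_I/\partial x$ never vanishes on $E_{F_I}$ because $x\neq 0$ there. The function $s_I(t)=(t-a_1)(a_2-t)+b$ equals $b$ exactly at the endpoints $a_1,a_2$, has nonzero derivative $\pm(a_2-a_1)$ there, and has a single critical point at the midpoint $t=\frac{a_1+a_2}{2}$, with critical value $\frac{(a_2-a_1)^2}{4}+b$. The assumption that this value is not a critical value of $h_X$ ensures that the third condition of Theorem \ref{thm:2} is required only at points where $s_I(t_0)$ is a regular value of $h_X$. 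The points where $s_I$ is critical are handled as in Case 2-A, via $\frac{\partial F_I}{\partial t}\neq 0$.

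In both cases Theorem \ref{thm:2} therefore yields the submanifold $X_{(F_I,E_{F_I},s_I,h_X),m}$. To see that the projection is a normalized round fold map, use polar coordinates $x=r\theta$. The relation $F_{I,t}(x)=0$ determines $t$ smoothly as a function of $r$, namely $t=a-r^2$ or $t=r$. The map is then $\mathcal{A}$-equivalent to the product of the identity on $S^1$ with the function $h_X$ restricted to $h_X^{-1}(\sigma(r))$, where $\sigma(r)=s_I(t(r))$. By the argument in the proof of Theorem \ref{thm:1}, over each circle where $\sigma(r)$ is a critical value of $h_X$ the map is locally the product of a restriction of $h_X$ with the identity on that circle, and over the remaining open annuli it is a trivial bundle. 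Hence, when $h_X$ is Morse (as implicitly needed for fold singularities, cf.\ Corollary \ref{cor:1}), the map is a fold map. Its singular value set is a finite union of concentric round circles, one for each critical value of $h_X$ in the range of $\sigma$, together with the boundary circle coming from the minimum $b$. These circles are embedded and disjoint, so the map is image simple and normalized round.

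The main obstacle I expect is the degenerate point of each case: the centre $x=0$ in (1), where the circles collapse, and the turning point of $s_I$ in (2). In (1) I would verify smoothness at the centre directly, using the regularity of the level $h_X^{-1}(a+b)$ and the graph description $t=a-|x|^2$. In (2) I would use $\frac{\partial F_I}{\partial t}\neq 0$ to solve $t=|x|$, and note that the submanifold is locally the graph of the map $x\mapsto (|x|,\ h_X^{-1}(s_I(|x|)))$ over a region where $h_X$ is regular, which is fine even though $s_I'=0$ there.
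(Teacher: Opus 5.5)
The paper gives no proof of this corollary (it is left as an exercise). Your route is the intended one: check the hypotheses of Theorem \ref{thm:2}, then write the projection in polar coordinates as $\mathrm{id}_{S^1}\times g$, where $g$ is a page function. You are right that the third bullet in (1) must concern $h_X$, and that $h_X$ must tacitly be Morse for any fold conclusion.

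\textbf{The gap.} You conclude image simplicity from the fact that the singular value set is a finite union of disjoint concentric circles. That does not follow. Image simple means that $f|_{S(f)}$ is an embedding.

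In your coordinates the page is $P=\{(r,y)\mid h_X(y)=\sigma(r)\}$ with $g(r,y)=r$ (not a restriction of $h_X$ to a level set). Then $S(f)=S^1\times S(g)$, and the critical points of $g$ are the pairs $(r,y)$ with $y\in S(h_X)$ and $\sigma(r)=h_X(y)$.

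Suppose two distinct critical points $y_1\neq y_2$ of $h_X$ lie on one level $c\in s_I(I)$. For instance, in (2) take $X\subset\mathbb{R}^3$ a sphere whose minimum $b$ is attained at two symmetric lowest points. Then $S(f)$ has two components mapping diffeomorphically onto the same circle, so $f|_{S(f)}$ is two-to-one there. The map is then neither image simple nor round.

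So you must add the hypothesis that every critical value of $h_X$ in $s_I(I)$ is attained at exactly one critical point. The corollary tacitly needs this, just like the Morse condition; it is exactly the property of page functions of round fold maps (compare the condition recalled in Example \ref{ex:1}). With it, $g|_{S(g)}$ is injective and your argument closes.

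\textbf{Smaller points.}
\begin{itemize}
\item In (1) you should not read $I$ as $[0,a]$. For $t>a$ the zero sets are empty, so $I=\{t\geq 0\}$ is harmless, provided the no-critical-point condition is read on $F_{I,0}^{-1}(0)$. With $I=[0,a]$, Definition \ref{def:2} actually fails: $s_I(a)=a+b>b$, and $F_{I,a}^{-1}(0)=\{0\}$ is a critical point of $F_{I,a}$.
\item In (2) there are two singular value circles per critical value $c\in[b,\frac{(a_2-a_1)^2}{4}+b)$, not one. Their radii are $r_-(c)<\frac{a_1+a_2}{2}<r_+(c)$, namely $a_1$ and $a_2$ when $c=b$.
\item Your sentence on the hypothesis about $\frac{(a_2-a_1)^2}{4}+b$ is reversed. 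The third bullet of Theorem \ref{thm:2} is demanded only where $s_I(t_0)$ is a critical value of $h_X$. The hypothesis guarantees that such $t_0$ is never the midpoint, the only critical point of $s_I$.
\end{itemize}
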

We only note that $t=\frac{a_1+a_2}{2}$ is the unique critical point of $s_I$.
\subsection{Examples.}
We close the present paper by presenting related examples of certain new types.
\begin{Ex}
\label{ex:2}
Let $r>0$ be a number satisfying $\frac{\sqrt{3}}{2}<r<1$. Let $l \geq 0$ be a non-negative integer. 
We give circles $\{(x_1,x_3) \mid {(x_1-j)}^2+{(x_3+r)}^2=1\}$ indexed by all integers whose absolute values are smaller than or equal to $l$ or all positive integers whose absolute values are smaller than or equal to $l$. Any distinct two of them have no intersection in $\{(x_1,x_3) \mid x_3 \geq 0\}$.  
We also give two segments $\{(x_1,x_3) \mid x_1+x_3+l+1=0, 0 \leq x_3 \leq 1\}$ and $\{(x_1,x_3) \mid -x_1+x_3+l+1=0, 0 \leq x_3 \leq 1\}$.
We put $f_j(x_1,x_3):={(x_1-j)}^2+{(x_3+r)}^2-1$. We also put $f_{+}(x_1,x_3):=x_1+x_3+l+1$ and $f_{-}(x_1,x_3):=-x_1+x_3+l+1$.
Let $f_{0,\pm}(x_1,x_3)$ denote the product of all these functions.
We can define the set $\{(x_1,x_2,x_3) \mid  f_{0,\pm}(x_1,x_3)-{x_2}^2=0, 0 \leq x_3 \leq 1\}$. This is, by arguments of \cite[Main Theorem 1 and Proof of Main Theorem 1]{kitazawa5}, or due to implicit function theorem, essentially, we have a compact, conenected and smooth surface embedded in ${\mathbb{R}^3}$ and by composing the (restriction of the) projection ${\pi}_{3,2}$, we have a region in ${\mathbb{R}}^2$.

Let $I:=\{t \mid a_1=0 \leq t \leq a_2=1\}$, $b \in \mathbb{R}$, $F_{I,t}(x):=f_{0,\pm}(x_1,t)-{x_2}^2$ ($x=(x_1,x_2)$), and $s_I(t)=b+(t-a_1)(a_2-t)$.

Let $h_X$ be a smooth proper height function on a manifold $X$ satisfying the following.
\begin{itemize}

\item $h_X$ has the minimum $b$ and $s_I(I) \subset h_X(X)$.
\item The singular value set $h_X(S(h_X))$ of $h_X$ is closed and discrete in $\mathbb{R}$.
\item $s_l(1-r)$ is not a critical value of $h_X$.
\end{itemize}
We also pose the condition that $1-r$ is not a critical point of $s$. In short we assume $1-r \neq \frac{a_1+a_1}{2}$. 
We have an ${\rm (}n,m_1,m_2{\rm )}$-VA-FR-H-system $(F_I,E_{F_I},s_I,h_X)$.
\end{Ex}
\begin{Ex}
\label{ex:3}
Let $l>0$ be a positive integer. Let $\{p_j\}_{j=1}^l$ be an increasing sequence of real numbers. 
Let $f_{\{p_j\}_{j=1}^l}(x_1,x_3):=x_3-{\prod}_{j=1}^l {(x_1-p_j)}^2$. Let $T_{\{p_j\}_{j=1}^l}$ be a sufficiently large positive number. 
We can define the set $\{(x_1,x_2,x_3) \mid f_{\{p_j\}_{j=1}^l}(x_1,x_3)-{x_2}^2=0, x_3 \leq T_{\{p_j\}_{j=1}^l}\}$. This is, by arguments of \cite[Main Theorem 1 and Proof of Main Theorem 1]{kitazawa5}, or due to implicit function theorem, essentially, we have a compact, conenected and smooth surface embedded in ${\mathbb{R}^3}$ and by composing the (restriction of the) projection ${\pi}_{3,2}$, we have a region in ${\mathbb{R}}^2$.

Let $I:=\{t \mid t \geq a_0=-T_{\{p_j\}_{j=1}^l}\}$, $b \in \mathbb{R}$, $F_{I,t}(x):=f_{\{p_j\}_{j=1}^l}(x_1,t)-{x_2}^2$ ($x=(x_1,x_2)$), and $s_I(t)=t-a_0+b$.

Let $h_X$ be a smooth proper height function on a manifold $X$ satisfying the following. 
\begin{itemize}
\item $h_X$ has the minimum $b$ and $s_I(I) \subset h_X(X)$.
\item The singular value set $h_X(S(h_X))$ of $h_X$ is closed and discrete in $\mathbb{R}$.

\item $b+s_l(t_s)$ is not a critical value of $h_X$ for any critical value $t_s$ of the function ${\prod}_{j=1}^l {(x-p_j)}^2$ on $\mathbb{R} \ni x$.
\end{itemize}
We have an ${\rm (}n,m_1,m_2{\rm )}$-VA-FR-H-system $(F_I,E_{F_I},s_I,h_X)$.
\end{Ex}
Example \ref{ex:4} is related to ${\rm (}n,m_1,m_2{\rm )}$-VA-FR-H-systems. We present a quadruple $(F_I,E_{F_I},s_I,h_X)$ which is similar to an ${\rm (}n,m_1,m_2{\rm )}$-VA-FR-H-system and which is not an ${\rm (}n,m_1,m_2{\rm )}$-VA-FR-H-system, as Example \ref{ex:4}.
\begin{Ex} 
\label{ex:4}
 Let $I:=\{a_1 \leq t \leq a_2\}$ with $a_2>a_1>0$, $b \in \mathbb{R}$, $F_{I,t}(x):=x_1-tx_2^2-t^3$ ($F_{I,t}:{\mathbb{R}}^2 \rightarrow \mathbb{R}$) and $s_I(t):=(t-a_1)(a_2-t)+b$. 
The value of the derivative $\frac{\partial F_{I,t}}{\partial t}$ at $(x,t):=(x_0,t_0)=(x_{0,1},x_{0,2},t_0)$ is ${x_{0,2}}^2-3t^2$ and $0$ at $(x_0,t_0)=(0,0)$.
Let $h_X$ be a smooth proper height function.
\begin{itemize}
\item $h_X$ has the minimum $b$ and $s_I(I) \subset h_X(X)$.
\item The singular value set $h_X(S(h_X))$ of $h_X$ is closed and discrete in $\mathbb{R}$.
\end{itemize}
 We have a quadruple $(F_I,E_{F_I},s_I,h_X)$ which is similar to a ${\rm (}2,m_1,m_2{\rm )}$-VA-FR-H-system and which is not a  ${\rm (}2,m_1,m_2{\rm )}$-VA-FR-H-system.
\end{Ex}

\section{Conflict of interest. Data availability. AI.}
 The author is a researcher at Osaka Central Advanced Mathematical Institute (OCAMI researcher) and it is funded by MEXT Promotion of Distinctive Joint Research Center Program JPMXP0723833165. He thanks all there for the hospitality, where he is not employed by the institute or related projects 
 
We have no data other than the present file related to the present paper. We do not assume non-trivial arguments in formally unpublished. It is no problem to refer to them, to some extent.

Essentially, AI is not used for our mathematical arguments and result.

\end{document}